\theoremstyle{plain}
\newtheorem{theorem}{Theorem}[section]
\newtheorem{proposition}[theorem]{Proposition}
\newtheorem{lemma}[theorem]{Lemma}
\newtheorem{corollary}[theorem]{Corollary}
\theoremstyle{definition}
\renewcommand{\algorithmicrequire}{\textbf{Input:}}
\renewcommand{\algorithmicensure}{\textbf{Output:}}
\DeclareMathOperator{\GL}{GL}
\DeclareMathOperator{\im}{Im}
\begin{document}
\title{Generalizing Frobenius Inversion to Quaternion Matrices}

\author[Q-Y.~Chen]{Qiyuan~Chen}
\address{KLMM, Academy of Mathematics and Systems Science, Chinese Academy of Sciences, Beijing 100190, China}
\email{chenqiyuan@amss.ac.cn}
\author[J.~Uhlmann]{J.\,Uhlmann}
\address{Department of Electrical Engineering and Computer Science, University of Missouri - Columbia}
\email{uhlmannj@missouri.edu}
\author[K.~Ye]{Ke Ye}
\address{KLMM, Academy of Mathematics and Systems Science, Chinese Academy of Sciences, Beijing 100190, China}
\email{keyk@amss.ac.cn}
\date{}

\begin{abstract}
In this paper we derive and analyze an algorithm for inverting quaternion
matrices. The algorithm is an analogue of the Frobenius algorithm for the complex matrix inversion. On the theory side, we prove that our algorithm is more efficient that other existing methods. Moreover, our algorithm is optimal in the sense of the least number of complex inversions. On the practice side, our algorithm outperforms existing algorithms on randomly generated matrices. We argue that this algorithm can be used to improve the practical utility of 
recursive Strassen-type algorithms by providing the fastest possible base 
case for the recursive decomposition process when applied to quaternion 
matrices.
\end{abstract}

\maketitle

\section{Introduction}
Quaternions are a typical example of hypercomplex number systems, which was first discovered by Hamilton in 1843 \cite{Hamilton10}. Since then, the quaternions have been extensively studied and applied in various fields of mathematics including contact and spin geoemtry \cite{Arnold1995,HM89}, function theory \cite{Sudbery79,Deavours73}, non-commutative algebra \cite{Hurwitz1922,Chevalley1996,Cohn1995}, linear algebra \cite{Fuzhen1997} and algebraic topology \cite{Hopf1931}. On the other side, since the group of unit quaternions is isomorphic to the group $\operatorname{SU}(2)$ consisting of $2\times 2$ special unitary matrices, they provide us a natural way to represent spatial rotations.\footnote{$\operatorname{SU}(2)$ is a double cover of the group $\operatorname{SO}(3)$ of all spatial rotations.} Because of such a surprising connection, quaternions are widely applied in areas related to rigid body mechanics and quantum mechanics \cite{Berthold87,GRUBIN70, Arribas2006,Faugeras86,Paldus2013,NSN17,Shoemake85}. Quaternions are also useful for combining relating variables into a single algebraic entity that can allow them to be manipulated collectively in a natural way, e.g., representing RGB pixel values using the $i$, $j$, and $k$ components of a single quaternion. During the past decade, novel quaternion-based methods and models have emerged in a wide variety of signal processing applications \cite{VRS10, Ghouti2014,YLHHT15,VPVS11}.

The increasing prevalence of quaternions in science and engineering has led to increased focus on the development of high-efficiency implementations of core utilities such as the quaternion LU decomposition \cite{WLZZ2018}, the quaternion SVD \cite{JNS19}, low rank approximation of quaternion matrices \cite{LLJ22}, and high performance quaternion matrix multiplication \cite{Williams2019}.

The focus of this paper is on arguably the most fundamental quaternion matrix operation after multiplication: {\em quaternion matrix inversion}.  Presently there are only four prominent algorithms for matrix inversion over quaternions in the literature \cite{WLZZ2018,Fuzhen1997,Turner2009,SB2022}, and their use includes applications such as computing or estimating the inverse of a covariance matrix in statistics and machine learning \cite{Anderson58,VPVS11,Yuan10}; quaternion batch normalization \cite{CA18,TMG19}; construction of orthogonal polynomials \cite{DN59}; the computation of numerical integrals \cite{MGJ91}; and hardware design of MIMO radios \cite{CSD11}.

The structure of this paper is as follows. We begin with background
on the algorithms of matrix inversion and how the most efficient
algorithm depends on the type (e.g., real, complex, or quaternion)
of the matrix. We then derive an algorithm that is optimized for 
quaternion matrix inversion. After that, we discuss the computational complexity of our algorithm from two different points of view. Finally, we provide empirical results 
demonstrating its efficiency advantage over prior methods 
presented in the literature.

\section{Background}

The development of practically efficient methods for manually inverting matrices
dates back to the early 20th Century. An important early result due to Frobenius
in 1910 was a method for decomposing the inverse of a complex matrix into
a form requiring only the evaluation of inverses of real matrices. Interest in 
methods of this type was renewed in the 1950s and 1960s as a means for
inverting complex matrices in contexts in which complex variables were not
natively supported, e.g., when numerical libraries contained only optimized
routines for real matrices.

In 1969, Strassen showed that a special recursive block decomposition of 
the matrix multiplication problem could achieve better complexity
than the conventional $O(n^3)$-time algorithm. Moreover, he showed that
this implies related recursive algorithms with the same subcubic complexity 
for computing the determinant and inverse of a matrix. Refinements of the 
approach have led to progressively improving computational complexities  
over the decades \cite{CoppersmithW90}, with $O(n^{2.37286})$ being
the current best complexity for matrix multiplication \cite{JV21} and, 
consequently, matrix inversion.

While Strassen-type algorithms provide better {\em asymptotic}
computational complexities, their practical utility depends on the
absolute efficiency offered for the problem at hand, i.e., whether
it is faster than the best cubic-complexity algorithm for the matrix size 
of interest. More specifically, the practical efficiency of a generic 
Strassen-type recursion depends on the base-case efficiency of 
the fastest available cubic-complexity algorithm. In other words, 
there exists a fixed breakeven matrix size at which it is faster 
to apply a cubic-complexity algorithm than it is to continue the recursive 
block decomposition process down to the scalar base case. In the case of
real matrix multiplication, the best base-case algorithm is the conventional 
cubic-complexity matrix multiplication algorithm. In the case of 
matrix inversion, however, the situation is much more subtle.

In \cite{DLY22}, the authors established a surprising result that the 
Frobenius algorithm for inverting a complex matrix incurs less numerical 
overhead, and is demonstrably faster, than state-of-the-art
methods based on LU and QR matrix decompositions. More
specifically, they examined the Frobenius inversion method
for a complex matrix $Z = A+i B$:
\begin{equation}
    Z^{-1} = (A + BA^{-1}B)^{-1} - i A^{-1}B(A+BA^{-1} B)^{-1},
    \label{eq:frob}
\end{equation}
which requires only the evaluation of inverses of real 
matrices $A$ and $B$. The expression \eqref{eq:frob} requires only
the $A$ component of nonsingular $Z$ to be nonsingular, but 
it can be reformulated to require only $B$ to be 
nonsingular\footnote{Randomization, e.g., as suggested by Zielinski 
\cite{inversion_ref9}, can reduce assumptions to the minimum of 
nonsingular $Z$, but at the expense of additional overhead. In this 
paper we will focus only on generalizations of \eqref{eq:frob} with
the understanding that our results can be trivially reformulated
to satisfy different assumptions about the structure of $Z$.}.
We now show that a similar approach can be applied to 
obtain improved computational efficiency for the case of
quaternion matrices.
 
\section{Quaternion Matrices}

Space of quaternions, denoted $\mathbb{H}$, generalizes
the complex space $\mathbb{C}$ by introducing two additional
complex components/dimensions, denoted $j$ and $k$. Thus,
whereas a complex number can be represented with two real
scalar parameters $a$ and $b$ as $a+ib$, a quaternion has
four real parameters and has the form $a+ib+jc+kd$, i.e., 
it has one real and three imaginary components. Quaternions
are algebraically important because they form a linear space
with properties analogous to matrices, e.g., they do not
commute, but retain many valuable properties of complex
numbers, e.g., is a normed division algebra.

The imaginary components of a quaternion are analogous
to the complex imaginary component in that
\begin{equation}
i^2 = \text{-1},~ j^2 = \text{-1},~ \mbox{and}~ k^2 = \text{-1}.
\end{equation} 
with component cross products 
\begin{equation}
ij = k,~ jk = i,~ \mbox{and}~ ki = j,
\end{equation} 
and reversing the order of terms multiplies the
result by -$1$, i.e., they anticommute with respect
to reverse cyclical lexicographic order:
\begin{equation}
ji = \text{-}k,~ kj = \text{-}i,~ \mbox{and}~ ik = \text{-}j.
\end{equation} 

While less familiar, practical interest in quaternions has increased
steadily since the 1990s, and are now widely used in computer
graphics and GIS systems to perform rotations around a specified
vector, as well as in automatic control and system analysis
applications involving state evolution on a sphere, e.g., circular
orbits around the earth. More recently, quaternion matrices, i.e.,
matrices with quaternion elements, have increased in prominence
in applications ranging from representations of color information 
for image processing to uses in neural networks.

A quaternion matrix can be represented in the following form:
\begin{equation}
	H = A + iB + jC + kD,
	\label{qmat}
\end{equation}  
for real-valued matrices $A$, $B$, $C$, and $D$. A natural
question, then, is whether the computation of the inverse of a
given quaternion matrix can be made more efficient by using
a decomposition inspired by that of Frobenius for complex
matrices.

\section{Inverting Quaternion Matrices}

Our first observation is that the Frobenius method
can be applied directly in the case that any two of the
matrices $B$, $C$, and $D$ are zero. For example,
if $B$ and $C$ are zero, then $H =A+kD$ and 
\begin{equation}
    H^{-1} = (A + DA^{-1}D)^{-1} - k A^{-1}D(A+DA^{-1} D)^{-1},
    \label{frobj}
\end{equation}
where the inverse is computed in the $j$ complex plane.
From this observation, it can be concluded that any generalized
Frobenius-type inverse must satisfy the above with regard to
each of $i$, $j$, and $k$ when the matrix coefficients on the
other two are zero. 

We observe that there is an inclusion of $\mathbb{R}$-algebras: 
\[
\mathbb{R} \subseteq \mathbb{C} \subseteq \mathbb{H}.
\]
This implies that
\[
M_n(\mathbb{R}) \subseteq M_n(\mathbb{C}) \subseteq M_n(\mathbb{H}).
\]
We remark that $M_n(\mathbb{C})$ is an $M_n(\mathbb{R})$-bimodule, since $M_n(\mathbb{C}) \simeq M_n(\mathbb{R}) \otimes_{\mathbb{R}} \mathbb{C}$. Moreover, $\mathbb{C}$ is a quadratic field extension of $\mathbb{R}$, over which the matrix inversion is thoroughly discussed in \cite{DLY22}. As a comparison, we notice that $\mathbb{H}$ is not an algebra over $\mathbb{C}$, according to the Gelfand-Mazur theorem \cite{Gelfand41,Mallios86}. Thus the method developed in \cite{DLY22} does not apply directly to the inversion in $M_n(\mathbb{H})$ over $M_n(\mathbb{C})$. However, we still have 
\[
M_n(\mathbb{H}) \simeq M_n(\mathbb{C}) \otimes_{\mathbb{C}} \mathbb{H}.
\]
This implies that given any $Z, W \in M_n(\mathbb{H})$, we may write 
\begin{align*}
Z &= A + iB + jC + kD = (A + iB) + j(C-iD),\\ W  &= E + iF + jG + kH = (E + iF) + j(G-iH),
\end{align*}
where $A,B,C,D,E,F,G,H\in M_n(\mathbb{R})$. We observe that 
\begin{align*}
WZ &=\left[ (A + iB) + j(C-iD) \right] \left[ (E + iF) + j(G-iH) \right] \\
&= \left[(A+iB)(E + iF) - (C+iD)(G-iH)\right] +j \left[ (C-iD)(E+iF) + (A - iB)(G - iH) \right],
\end{align*}
Thus $W$ is the inverse of $Z$ if and only if 
\begin{align}
(A+iB)(E + iF) - (C+iD)(G-iH) &= I_n, \label{eq:equation for inversion-1}\\
(C-iD)(E+iF) + (A - iB)(G - iH) &= 0. \label{eq:equation for inversion-2}
\end{align}
Solving \eqref{eq:equation for inversion-1} and \eqref{eq:equation for inversion-2} for $E,F,G$ and $H$, we obtain the first two items of the lemma that follows. 
\begin{lemma}\label{lem:inversion formula}
Let $Z = A + iB + jC + kD\in M_n(\mathbb{H})$. 
\begin{enumerate}[(a)]
\item \label{lem:inversion formula-1} if $A + iB$ is invertible in $M_n(\mathbb{C})$, then $(A+iB) + (C+iD)(A-iB)^{-1}(C-iD)$ is also invertible and 
\begin{equation}
\begin{split}
Z^{-1} &=\left[ (A+iB) + (C+iD)(A-iB)^{-1}(C-iD)\right]^{-1} \\
&-j (A - iB)^{-1}(C-iD)\left[ (A+iB) + (C+iD)(A-iB)^{-1}(C-iD)\right]^{-1}.
\end{split}
\end{equation}
\item \label{lem:inversion formula-2}if $C + iD$  invertible in $M_n(\mathbb{C})$, then $(A+iB)(C-iD)^{-1}(A-iB) - (C + iD)$ is invertible and 
\begin{equation}
\begin{split}
Z^{-1} &= (C - iD)^{-1}(A-iB)\left[ (A+iB)(C-iD)^{-1}(A-iB) + (C + iD)\right]^{-1} \\
&+j  \left[ - (A+iB)(C-iD)^{-1}(A-iB) - (C + iD)\right]^{-1}.
\end{split}
\end{equation}
\end{enumerate}
\end{lemma}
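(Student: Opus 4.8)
The plan is to treat \eqref{eq:equation for inversion-1} and \eqref{eq:equation for inversion-2} as a linear system over $M_n(\mathbb{C})$ in the two complex unknowns $P = E + iF$ and $Q = G - iH$, which are exactly the complex ``coordinates'' of the putative inverse $Z^{-1} = P + jQ$. Writing $Z_1 = A + iB$ and $Z_2 = C - iD$, so that $\overline{Z_1} = A - iB$ and $\overline{Z_2} = C + iD$, the two equations read $Z_1 P - \overline{Z_2} Q = I_n$ and $Z_2 P + \overline{Z_1} Q = 0$. The only algebraic inputs beyond ordinary matrix manipulation are the commutation rule $jM = \overline{M} j$ for $M \in M_n(\mathbb{C})$ (which is what produced the conjugations in the product expansion above) and the fact that entrywise conjugation is a ring automorphism of $M_n(\mathbb{C})$, so that $Z_1$ invertible forces $\overline{Z_1}$ invertible and likewise for $Z_2$; this is what legitimizes passing freely between the four real blocks and the two complex ones.

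For part~(a), I would use the hypothesis that $Z_1 = A + iB$ is invertible to solve the second equation for $Q$, obtaining $Q = -\overline{Z_1}^{-1} Z_2 P$, and substitute into the first. This collapses the system to $\left( Z_1 + \overline{Z_2}\,\overline{Z_1}^{-1} Z_2 \right) P = I_n$, whose coefficient is precisely $S := (A+iB) + (C+iD)(A-iB)^{-1}(C-iD)$. Reading off $P = S^{-1}$ and then $Q = -\overline{Z_1}^{-1} Z_2 S^{-1}$ reproduces the two terms of the stated formula after re-expanding in $A, B, C, D$. Part~(b) is the mirror image: assuming $C + iD$ (equivalently $C - iD$) invertible, I would instead eliminate $P$ from the second equation via $P = -Z_2^{-1}\overline{Z_1} Q$ and substitute into the first, reducing everything to a single equation for $Q$; the only real hazard here is tracking the signs produced by the two substitutions, which is what makes the inverted matrix carry the particular sign pattern recorded in the statement.

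The substantive step, and the one I expect to be the main obstacle, is justifying that $S$ (respectively its analogue in part~(b)) is \emph{also} invertible, rather than merely inverting it formally. Under the standing assumption that $Z$ itself is invertible (implicit in the appearance of $Z^{-1}$), I would argue this in one of two equivalent ways. The first is purely formal: since $Z^{-1}$ exists, its complex part $P$ exists and satisfies $S P = I_n$ by the reduction above, and a square complex matrix possessing a right inverse is invertible, so $S$ is invertible with $S^{-1} = P$. The second is structural: the invertibility of $Z \in M_n(\mathbb{H})$ is equivalent to that of its complex companion matrix $\chi_Z = \begin{pmatrix} Z_1 & -\overline{Z_2} \\ Z_2 & \overline{Z_1} \end{pmatrix} \in M_{2n}(\mathbb{C})$, and $S$ is exactly the Schur complement of the invertible block $\overline{Z_1}$ in $\chi_Z$, so the identity $\det \chi_Z = \det \overline{Z_1} \cdot \det S$ forces $\det S \neq 0$. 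I would place the first argument in the main text and record the companion-matrix viewpoint as a remark, since it also makes transparent that ``$Z_1$ invertible'' alone cannot guarantee $S$ invertible without the invertibility of $Z$.
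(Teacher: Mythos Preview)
Your proposal is correct and essentially coincides with the paper's argument: the paper represents $Z$ by the $2n\times 2n$ complex companion matrix $\widehat{Z}=\begin{pmatrix} A+iB & C+iD\\ -C+iD & A-iB\end{pmatrix}$ and inverts it via the Schur complement of the $(A-iB)$-block, which is exactly your matrix $S$, then reads off $Z^{-1}=W_1-jW_2$. Your ``primary'' route (direct elimination in the system $Z_1P-\overline{Z_2}Q=I_n$, $Z_2P+\overline{Z_1}Q=0$) is precisely what the paper sketches just before the lemma, and your ``secondary'' Schur-complement justification for the invertibility of $S$ is the paper's actual proof---so the two presentations differ only in which of these equivalent viewpoints is foregrounded.
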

\begin{proof}
It is sufficient to prove \eqref{lem:inversion formula-1}. To that end, we recall that a quaternion $z = a + ib + jc + kd \in \mathbb{H}$ can be represented as a $2\times 2$ complex matrix $\begin{bmatrix}
a + ib & c + id \\
-c + id & a - ib
\end{bmatrix}$, where $a,b,c,d\in \mathbb{R}$. Therefore, we may also represent $Z= A + iB + jC + kD \in M_n(\mathbb{H})$ as a complex $2\times 2$ block matrix 
\[
\widehat{Z} = \begin{bmatrix}
A + iB & C + iD \\
-C + iD & A - iB
\end{bmatrix} \in M_{2n}(\mathbb{C}),
\]
where $A,B,C,D\in M_n(\mathbb{R})$. As a consequence, inverting $Z$ is equivalent to inverting $\widehat{Z}$. If $A - iB$ is invertible, then we may invert $\widehat{Z}$ by the Schur complement, i.e., 
\[
\widehat{Z}^{-1} = \begin{bmatrix}
W_1 &  -\overline{W_2}\\
W_2  & \overline{W_1}
\end{bmatrix},
\]
where $\overline{X}$ denotes the conjugate of a matrix $X$ and 
\begin{align*}
W_1 &= \left[ (A+ iB) + (C+iD)(A-iB)^{-1}(C-iD) \right]^{-1}, \\
W_2 &= (A-iB)^{-1}(C - iD) \left[ (A + iB) + (C + iD)(A- iB)^{-1}(C - iD) \right]^{-1}.
\end{align*}
Here the invertibility of $(A+ iB) + (C+iD)(A-iB)^{-1}(C-iD)$ is ensured by that of $A+iB$ and equations  \eqref{eq:equation for inversion-1} and \eqref{eq:equation for inversion-2}. Hence the inverse of $Z$ is 
\[
Z^{-1} = W_1 - j W_2
\]
and this completes the proof.
\end{proof}
Clearly, one can iterate the roles of $A,B,C,D$ in Lemma~\ref{lem:inversion formula} to obtain more inversion formulae in $M_n(\mathbb{H})$.
\subsection{The Frobenius algorithm over \texorpdfstring{$M_n(\mathbb{C})$}{} and its optimality}
 For notational simplicity, we define 
\small \begin{align*}
\mathcal{E}_1 &\coloneqq \lbrace
A + i B + j C + kD\in M_n(\mathbb{H}): A + iB \in \GL_n(\mathbb{C}) \rbrace, \\
\mathcal{E}_2 &\coloneqq \lbrace
A + i B + j C + kD\in M_n(\mathbb{H}): C + iD \in \GL_n(\mathbb{C}) \rbrace.
\end{align*} \normalsize
We describe formulae in Lemma~\ref{lem:inversion formula} \eqref{lem:inversion formula-1}--\eqref{lem:inversion formula-2} as a pseudocode in Algorithm~\ref{alg:inverse in MnH}.  
\begin{algorithm}[!htbp]
\caption{complex Frobenius inversion}
\label{alg:inverse in MnH}
\begin{algorithmic}[1]
\renewcommand{\algorithmicrequire}{ \textbf{Input}}
\Require
$Z = A + i B + j C + k D \in \mathcal{E}_1 \cup \mathcal{E}_2$
\renewcommand{\algorithmicensure}{ \textbf{Output}}
\Ensure
inverse of $Z$
\If {$Z \in \mathcal{E}_1$}
\State compute $X_1 = (A - iB)^{-1}$, $X_2 = X_1(C - iD)$, $X_3 = (C+iD)X_2$, 
\State compute $X_4 =( (A + iB) + X_3 )^{-1}$, $X_5 = -X_2 X_4$.
\State\Return $Z^{-1} = X_4 + j X_5$.
\ElsIf {$Z \in \mathcal{E}_2$}
\State compute $X_1 = (C-iD)^{-1}$, $X_2 = X_1 (A-iB)$, $X_3 = (A+iB)X_2$
\State compute $X_4 =( X_3 + (C + iD) )^{-1}$, $X_5 = X_2 X_4$
\State\Return $Z^{-1} = X_5 - j X_4$.
\EndIf 
\end{algorithmic}
\end{algorithm}
It is clear that Algorithm~\ref{alg:inverse in MnH} costs $2$ inversions and $3$ multiplications in $M_n(\mathbb{C})$. Next we analyze the optimlity of Algorithm~\ref{alg:inverse in MnH}. To that end, we need the following two lemmas.
\begin{lemma}\cite{DLY22}
For any field $\Bbbk$ and $A,B\in M_n(\Bbbk)$ such that both $A$ and $A+BA^{-1}B$ are invertible, it requires at least two matrix inversions to compute $(A+BA^{-1}B)^{-1}$, no matter how many matrix additions, matrix multiplications or scalar multiplications in $\Bbbk$ are used.
\label{lemma 3-3}
\end{lemma}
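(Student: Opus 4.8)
The plan is to fix an algebraic model of the computation and then count inversion gates. In this model the admissible operations act on $n\times n$ matrices as indivisible objects: matrix addition, matrix multiplication, multiplication by a \emph{fixed} scalar of $\Bbbk$, and matrix inversion; individual entries and data-dependent scalars such as $\det A$ or $\operatorname{tr} A$ are not accessible. The first step is to record the structural normal form forced by using at most one inversion. Before the single inversion gate no inverse is available, so the matrix fed into it is a value $p(A,B)$ of a fixed-coefficient noncommutative polynomial $p$ in $A,B$; everything after it is a fixed-coefficient noncommutative polynomial in $A,B$ and $Y\coloneqq p(A,B)^{-1}$. Hence any output computable with one inversion has the form $q\bigl(A,B,p(A,B)^{-1}\bigr)$ for fixed-coefficient noncommutative polynomials $p,q$. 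I would stress that the fixed-coefficient restriction is what makes the statement nontrivial: if data-dependent scalars and cofactors were permitted, a single inversion would already suffice, since $(A+BA^{-1}B)^{-1}=\det(A)\,N^{-1}$ with $N\coloneqq\det(A)\,A+B\operatorname{adj}(A)\,B$ a single matrix to be inverted.

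The zero-inversion case is then immediate: the output would be a fixed-coefficient polynomial in $A,B$, hence entrywise polynomial in the entries of $A$ and $B$, while the entries of $(A+BA^{-1}B)^{-1}$ are genuinely non-polynomial rational functions. So at least one inversion is required, and the entire content of the lemma is to exclude exactly one.

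To rule out one inversion I would pass to generic matrices, treating the entries of $A$ and $B$ as independent indeterminates, so that the putative identity $q\bigl(A,B,p(A,B)^{-1}\bigr)=(A+BA^{-1}B)^{-1}$ becomes an identity of noncommutative rational functions. The engine of the argument is a perturbation: substitute $B\mapsto tB$ with $t$ a central parameter and expand both sides at $t=0$. The right-hand side expands as $\sum_{k\ge 0}(-1)^k t^{2k}(A^{-1}B)^{2k}A^{-1}$, whose coefficients exhibit arbitrarily deep alternating nesting of the inner inverse $A^{-1}$ with $B$. The left-hand side instead draws all of its inverses from the single series $p(A,tB)^{-1}$, whose expansion is generated by powers of the one seed $p(A,0)^{-1}=p_0(A)^{-1}$; matching the $t^0$ term already forces $A^{-1}\in\Bbbk[A]\!\left[p_0(A)^{-1}\right]$, with $p_0(A)=A$ the natural choice. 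I would then compare the two series order by order and show that the rigidly constrained nesting available on the left cannot reproduce the alternating pattern $(A^{-1}B)^{2k}A^{-1}$ on the right.

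The crux — and the step I expect to be the main obstacle — is exactly this incompatibility: proving that $(A+BA^{-1}B)^{-1}$ has inversion height $2$ and cannot be collapsed to height $1$. I would phrase it through the theory of the free field, where inversion height is a genuine invariant of a rational expression, and transfer the conclusion to $n\times n$ matrices by taking $n$ large enough, or equivalently by noting that the required identity must hold for generic matrices of every size, so no polynomial identity of $M_n(\Bbbk)$ can engineer an accidental collapse. A secondary point to dispatch is the degenerate branch in which $p_0(A)=p(A,0)$ is not generically invertible: then $p(A,tB)^{-1}$ has a pole at $t=0$ while the right-hand side is regular there, and one must check that the polynomial post-processing $q$ cannot cancel this pole, so that this branch is excluded at the outset.
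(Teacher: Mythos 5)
The paper does not actually prove this lemma; it imports it verbatim from \cite{DLY22}, so there is no in-house argument to compare against, and your proposal has to stand on its own. Your setup is correct and matches how the paper consumes the lemma in the proof of Theorem~\ref{thm:optimality}: a one-inversion computation has the normal form $q\bigl(A,B,p(A,B)^{-1}\bigr)$ with $p,q$ fixed-coefficient noncommutative polynomials, and your caveat that admitting data-dependent scalars ($\det A$, $\operatorname{adj}A$) would collapse the lower bound is exactly the right one. The zero-inversion case is also fine, modulo actually exhibiting a pole of the target.

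The genuine gap is that the crux --- excluding one inversion --- is never proved, and the specific mechanism you propose cannot prove it. Expanding under $B\mapsto tB$ gives $(A+t^2BA^{-1}B)^{-1}=\sum_{k\ge0}(-1)^kt^{2k}(A^{-1}B)^{2k}A^{-1}$, and every one of these coefficients is a plain word in $B$ and the \emph{single} inverse $A^{-1}=p_0(A)^{-1}$ (take $p_0(A)=A$); each therefore lies in the algebra generated by $A,B,p_0(A)^{-1}$ and is reproducible, order by order, by a one-inversion scheme. Note also that $q\bigl(A,tB,p(A,tB)^{-1}\bigr)$ is an infinite power series in $t$ even though $q$ has bounded degree, so no degree count intervenes. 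The obstruction, whatever it is, is global and invisible to the term-by-term comparison; asserting that the ``rigidly constrained nesting'' on the left cannot reproduce the alternating pattern on the right is a restatement of the lemma, not an argument. The fallback to inversion height in the free field is only named, not executed, and carries an unaddressed transfer problem: the lemma is claimed for every fixed $n$, a hypothetical one-inversion algorithm is tied to that $n$ and need not extend to larger sizes, and the free field embeds into generic matrices only asymptotically. Finally, the statement is actually false for $n=1$, since $(a+ba^{-1}b)^{-1}=a\,(a^2+b^2)^{-1}$ uses one inversion; any correct proof must therefore exploit the noncommutative structure of $M_n(\Bbbk)$ for the specific $n\ge 2$ at hand, which your formal expansion never does.
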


\begin{lemma}
Let $p_1,p_2,q_1,q_2$ be polynomials over $\mathbb{C}$ in $n$ variables. Assume that 
\[
p_1(a) q_2(a) = p_2(a) q_1(a)
\]
for any $a = (a_{1},\dots,a_{n})\in\mathbb{R}^{n}$ such that $q_1(a) \ne 0$ and $q_2(a) \ne 0$. Then $p_{1} q_{2} = p_2 q_{1}$.
\label{lemma 3-4}
\end{lemma}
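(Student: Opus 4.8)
The plan is to show that the single polynomial $r \coloneqq p_1 q_2 - p_2 q_1 \in \mathbb{C}[x_1,\dots,x_n]$ is identically zero, since the desired identity $p_1 q_2 = p_2 q_1$ is precisely the statement $r = 0$. The hypothesis gives us only that $r$ vanishes on the subset $S \coloneqq \{a \in \mathbb{R}^n : q_1(a) \neq 0 \text{ and } q_2(a) \neq 0\}$, so the entire argument reduces to upgrading ``vanishing on $S$'' to ``vanishing as a polynomial.'' I may assume $q_1 \neq 0$ and $q_2 \neq 0$ as polynomials (this is the case of interest, the $q_i$ playing the role of denominators); otherwise $S$ is empty and there is nothing to compare.

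First I would argue that $S$ is dense in $\mathbb{R}^n$ in the Euclidean topology. Its complement is the real locus $E = \{a \in \mathbb{R}^n : q_1(a)\, q_2(a) = 0\}$. Because $\mathbb{C}[x_1,\dots,x_n]$ is an integral domain and $q_1, q_2 \neq 0$, the product $q_1 q_2$ is a nonzero complex polynomial. Writing $q_1 q_2 = f + i g$ with $f, g \in \mathbb{R}[x_1,\dots,x_n]$, we have for real $a$ that $(q_1 q_2)(a) = 0$ if and only if $f(a) = g(a) = 0$; since $q_1 q_2 \neq 0$, at least one of $f, g$ is a nonzero real polynomial. The real zero set of a nonzero polynomial in $\mathbb{R}[x_1,\dots,x_n]$ has empty interior (equivalently, Lebesgue measure zero), so $E$ is contained in a nowhere dense set, and hence $S = \mathbb{R}^n \setminus E$ is dense.

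Next I would extend the vanishing of $r$ from $S$ to all of $\mathbb{R}^n$. The restriction $r|_{\mathbb{R}^n}$ is a continuous $\mathbb{C}$-valued function vanishing on the dense set $S$, hence it vanishes on the closure $\overline{S} = \mathbb{R}^n$. Finally, a polynomial that vanishes on all of $\mathbb{R}^n$ must be the zero polynomial: writing $r = u + i v$ with $u, v \in \mathbb{R}[x_1,\dots,x_n]$, each of $u, v$ vanishes identically on $\mathbb{R}^n$, and a real polynomial vanishing at every point of $\mathbb{R}^n$ is $0$ (by induction on $n$, using that $\mathbb{R}$ is infinite). Therefore $r = 0$, which is exactly $p_1 q_2 = p_2 q_1$.

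The only genuine content, and the step I expect to require the most care, is the density claim for $S$: one must pass through the real and imaginary parts of the complex polynomial $q_1 q_2$ and invoke the integral-domain property to guarantee $q_1 q_2 \neq 0$, and then cite the standard fact that the real variety cut out by a nonzero real polynomial is nowhere dense. Everything afterward, namely the passage to the closure and the implication ``vanishing on $\mathbb{R}^n$ forces the zero polynomial,'' is routine.
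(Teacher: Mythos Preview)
Your proof is correct and follows essentially the same route as the paper: both reduce to showing that $r = p_1 q_2 - p_2 q_1$ vanishes on all of $\mathbb{R}^n$ by splitting complex polynomials into real and imaginary parts and invoking the fact that a nonzero real polynomial cannot vanish identically on $\mathbb{R}^n$. The only difference is packaging: you phrase the argument topologically (the complement of $S$ is contained in the zero set of a nonzero real polynomial, hence nowhere dense; then continuity extends the vanishing of $r$ from the dense set $S$ to $\mathbb{R}^n$), whereas the paper argues by contradiction and directly exhibits a single point $a$ at which the product of the chosen nonzero real or imaginary parts of $r$, $q_1$, and $q_2$ does not vanish. Your version has the minor advantage of handling the cases $\operatorname{Re}(q_i)=0$ versus $\operatorname{Im}(q_i)=0$ more cleanly, while the paper's version avoids any appeal to topology.
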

\begin{proof}
We observe that any polynomial $f$ over $\mathbb{C}$ can be uniquely written as $f= \operatorname{Re}(f)+ i \im(f)$, where $\operatorname{Re}(f)$ and $\im(f)$ are polynomials with real coefficients. We assume that $p_{1}q_{2}-p_{2}q_{1}\not=0$. Then it is clear that either $\operatorname{Re}(p_{1}q_{2}-p_{2}q_{1})\not=0$ or $\im(p_{1}q_{2}-p_{2}q_{1})\not=0$. Without loss of generality, we may assume that $\operatorname{Re}(p_{1}q_{2}-p_{2}q_{1})\not =0$. Similarly we may also assume $\operatorname{Re}(q_{1}) \ne 0$ and $\operatorname{Re}(q_{2}) \ne 0$. Since $\mathbb{R}$ is an infinite field, there exists $a = (a_{1},\cdots,a_{n})\in \mathbb{R}^{n}$, such that 
\[
\left( \operatorname{Re}(p_{1}q_{2}-p_{2}q_{1})\operatorname{Re}(q_{1})\operatorname{Re}(q_{2}) \right)(a)\not=0.
\] 
Hence $(p_{1}/q_{1}-p_{2}/q_{2})(a)\not =0$, $q_1(a) \ne 0$, $q_2(a)\not=0$, which contradicts to the assumption.
\end{proof}

Now we are ready to prove the optimality of Algorithm~\ref{alg:inverse in MnH}. To simplify the notation, we define 
\[
Z_1 = A + iB,\quad Z_2 = C + iD.
\]
Thus $Z = A + iB + jC + kD = Z_1 + j Z_2$. According to Algorithm~\ref{alg:inverse in MnH}, we may also assume that $Z_1$ is invertible so that $Z^{-1} = W_1 - j W_2$ where 
\begin{align*}
W_1 = \left( Z_1 + Z_2 \overline{Z_1}^{-1}\overline{Z_2} \right)^{-1},W_2 = \overline{Z_1}^{-1}\overline{Z_2} \left( Z_1 + Z_2\overline{Z_1}^{-1}\overline{Z_2} \right)^{-1}.
\end{align*}
Here $\overline{X}$  denotes the complex conjugate of a complex matrix $X$.
\begin{theorem}\label{thm:optimality}
It requires at least 1 addition and 2 inversions to compute $\left( Z_1 + Z_2\overline{Z_1}^{-1}\overline{Z_2} \right)^{-1}$. In particular, Algorithm~\ref{alg:inverse in MnH} is optimal in the sense of the least number of complex matrix inversions.
\end{theorem}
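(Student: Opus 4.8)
The plan is to derive the inversion lower bound by specializing the conjugation out of the picture, thereby reducing to the real identity covered by Lemma~\ref{lemma 3-3}, and to use Lemma~\ref{lemma 3-4} to transport the resulting identity from real to complex inputs. Throughout I work in the straight-line model over $\mathbb{C}$ whose gates are matrix addition, matrix multiplication, scalar multiplication and matrix inversion; the inputs are the real matrices $A,B,C,D$, so that $Z_1=A+iB$, $\overline{Z_1}=A-iB$, $Z_2=C+iD$ and $\overline{Z_2}=C-iD$ are all $\mathbb{R}$-linear combinations of the inputs and hence available at no inversion cost. The quantity to be bounded is $W_1=\left(Z_1+Z_2\overline{Z_1}^{-1}\overline{Z_2}\right)^{-1}$.

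First I would settle the additive claim. If a program used no additions at all, then an induction over its gates shows that every intermediate matrix is a scalar multiple of a single (noncommutative, Laurent) monomial in $Z_1,Z_2,\overline{Z_1},\overline{Z_2}$, since products, scalar multiples and inverses of such expressions are again of this form. Evaluating at $n=1$ shows that $\left(Z_1+Z_2\overline{Z_1}^{-1}\overline{Z_2}\right)^{-1}$ is not a monomial of this type, so at least one addition is unavoidable.

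For the inversion bound I would argue by contradiction: suppose a program $\mathcal{P}$ computes $W_1$ using only one inversion. Specializing the inputs by setting $B=D=0$ makes $Z_1=\overline{Z_1}=A$ and $Z_2=\overline{Z_2}=C$ with $A,C$ real, and turns $\mathcal{P}$ into a program $\mathcal{P}'$ that has exactly the inversion gates of $\mathcal{P}$ (specialization creates and deletes no gates) and that outputs $\left(A+CA^{-1}C\right)^{-1}$ on every real pair $(A,C)$ with $A$ and $A+CA^{-1}C$ invertible. Viewed over $\mathbb{C}$, each entry of the output of $\mathcal{P}'$ is a rational function of the entries of $A,C$; it agrees with the corresponding entry of $\left(A+CA^{-1}C\right)^{-1}$ on the real locus where both are defined, which is a nonempty Zariski-open, hence Zariski-dense, subset. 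Applying Lemma~\ref{lemma 3-4} to the numerator--denominator cross products entrywise upgrades this to an identity of rational functions over $\mathbb{C}$, so $\mathcal{P}'$ computes $\left(A+CA^{-1}C\right)^{-1}$ for all complex $A,C$ using at most one inversion. This contradicts Lemma~\ref{lemma 3-3} with $\Bbbk=\mathbb{C}$, and hence two inversions are needed to compute $W_1$.

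Finally, since $Z^{-1}=W_1-jW_2$ presents $W_1$ as the component of $Z^{-1}$ in $M_n(\mathbb{C})$, any routine returning $Z^{-1}$ yields $W_1$ at no extra inversion cost and therefore inherits the bound of two complex inversions; as Algorithm~\ref{alg:inverse in MnH} attains exactly two, it is optimal. The step I expect to be most delicate is the passage from the specialized program to a statement that Lemma~\ref{lemma 3-3} can consume: I must verify that setting $B=D=0$ is a legitimate substitution that preserves the inversion count and does not make any inversion gate generically singular, and---crucially---that Lemma~\ref{lemma 3-4} correctly converts an equality valid only on real inputs into a formal rational identity over $\mathbb{C}$, since Lemma~\ref{lemma 3-3} is a statement about computation over the field $\mathbb{C}$ rather than about real evaluations.
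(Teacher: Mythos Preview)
Your approach is essentially the paper's: both arguments specialize so that $\overline{Z_i}$ collapses to $Z_i$, invoke Lemma~\ref{lemma 3-4} to promote the resulting real identity to a rational identity over $\mathbb{C}$, and then contradict Lemma~\ref{lemma 3-3}. Your specialization $B=D=0$ is exactly the paper's formal substitution $\overline{Z_1}\mapsto Z_1$, $\overline{Z_2}\mapsto Z_2$ read at the level of the real inputs.

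The one point the paper treats that you skip is conjugation. The remark following the theorem stipulates that conjugation is a free gate, so a hypothetical one-inversion program may use both $f^{-1}$ and $\overline{f^{-1}}=\overline{f}^{\,-1}$. Your straight-line model omits conjugation, and this is what makes your specialized program $\mathcal{P}'$ output an honest rational function of the entries of $A,C$; with a conjugation gate present that would fail, and Lemma~\ref{lemma 3-4} would not apply. The paper closes this gap by the preprocessing step
\[
f^{-1}=\overline{f}\,(f\overline{f})^{-1},\qquad \overline{f}^{\,-1}=(f\overline{f})^{-1}f,
\]
which rewrites the output as a polynomial in $Z_1,\overline{Z_1},Z_2,\overline{Z_2}$ and the single inverse $(f\overline{f})^{-1}$, after which the substitution $\overline{Z_i}\mapsto Z_i$ yields a genuine rational map $\Phi$ and the rest of your argument goes through verbatim. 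If you want the full statement, insert this step before specializing; otherwise your proof establishes the bound only in the conjugation-free model.

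A minor inconsistency: for the addition lower bound you argue about monomials in $Z_1,Z_2,\overline{Z_1},\overline{Z_2}$, but your declared inputs are $A,B,C,D$; forming $Z_1=A+iB$ already costs an addition, so the bound is immediate in that model. If you instead take $Z_1,\overline{Z_1},Z_2,\overline{Z_2}$ as inputs, your monomial argument is fine.
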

We remark that Theorem~\ref{thm:optimality} should be understood as follows: $2$ is the minimum number of complex matrix inversions required to compute $Z^{-1}$, regardless of the number of additions, matrix multiplications, scalar multiplications and conjugations used.
\begin{proof}
1 addition is obviously necessary, thus it is sufficient to prove the necessity of 2 inversions. We proceed by contradiction. Assume that there exists an algorithm which costs only one inversion. Then we may find complex noncommutative polynomials $f$ and $g$ such that 
\begin{equation}\label{thm:optimality:eq1}
\left( Z_1 + Z_2\overline{Z_1}^{-1}\overline{Z_2} \right)^{-1} = g(Z_1,\overline{Z_1},Z_2,\overline{Z_2},f(Z_1,Z_2,\overline{Z_1},\overline{Z_2})^{-1},\overline{f(Z_1,Z_2,\overline{Z_1},\overline{Z_2})^{-1}})
\end{equation}
Clearly, one can further find complex noncommutative polynomials $h$ and $\phi$ so that \eqref{thm:optimality:eq1} may be simplified as follows.
\begin{align}
\left( Z_1 + Z_2\overline{Z_1}^{-1}\overline{Z_2} \right)^{-1} &= g\left( Z_1,\overline{Z_1},Z_2,\overline{Z_2},f(Z_1,Z_2,\overline{Z_1},\overline{Z_2})^{-1},\overline{f(Z_1,Z_2,\overline{Z_1},\overline{Z_2})^{-1}} \right) \nonumber \\
&= h\left( Z_1,\overline{Z_1},Z_2,\overline{Z_2},\left( f(Z_1,Z_2,\overline{Z_1},\overline{Z_2})\overline{f(Z_1,Z_2,\overline{Z_1},\overline{Z_2})} \right)^{-1} \right) \nonumber \\
&= h\left( Z_1,\overline{Z_1},Z_2,\overline{Z_2},\phi(Z_1,Z_2,\overline{Z_1},\overline{Z_2})^{-1} \right). \label{thm:optimality:eq2}
\end{align}
Next we consider the rational map $\Phi:M_n(\mathbb{C}) \times M_n(\mathbb{C}) \dashrightarrow M_n(\mathbb{C})$ defined by
\[
\Phi(Z_1,Z_2) = \left( Z_1 + Z_2 Z_1^{-1}Z_2 \right)^{-1}  - h\left( Z_1,Z_1,Z_2,Z_2,\phi(Z_1,Z_2,Z_1,Z_2)^{-1} \right).
\]
In particular, elements of the matrix $\Phi(Z_1,Z_2)$ are rational functions on $M_n(\mathbb{C}) \times M_n(\mathbb{C})$. By \eqref{thm:optimality:eq2}, we see immediately that $\Phi \equiv 0$ on $M_n(\mathbb{R}) \times M_n(\mathbb{R})$ whenever it is defined. Therefore Lemma~\ref{lemma 3-4} implies that elements of $\Phi$ must vanish on $M_n(\mathbb{C}) \times M_n(\mathbb{C})$ whenever they are defined. In particular, $\Phi(Z_1,Z_2) \equiv 0$ on $M_n(\mathbb{C}) \times M_n(\mathbb{C})$ whenever it is defined, from which we may conclude that $\left( Z_1 + Z_2 Z_1^{-1} Z_2 \right)^{-1}$ can be computed by one complex matrix inversion. This leads to a contradiction to Lemma~\ref{lemma 3-3}.
\end{proof}
\subsection{The Frobenius algorithm over \texorpdfstring{$M_n(\mathbb{R})$}{} and its complexity}
According to \cite{DLY22}, an inversion for a generic element in $M_n(\mathbb{C})$ costs $2$ inversions and $3$ multiplications in $M_n(\mathbb{R})$, and a multiplication in $M_n(\mathbb{C})$ costs $3$ multiplications in $M_n(\mathbb{R})$. Therefore, the total cost of inverting a generic $Z\in M_n(\mathbb{H})$ by Algorithm~\ref{alg:inverse in MnH}, together with algorithms in \cite{DLY22} for multiplication and inversion in $M_n(\mathbb{C})$, is at most $4$ inversions and $15$ multiplications in $M_n(\mathbb{R})$. However, we observe that there are actually two repeated multiplications, thus we may obtain Algorithm~\ref{alg:real Frobenius} and the theorem that follows. 
\begin{algorithm}[!htbp]
\caption{real Frobenius inversion}
\label{alg:real Frobenius}
\begin{algorithmic}[1]
\renewcommand{\algorithmicrequire}{ \textbf{Input}}
\Require
generic $Z = A + i B + j C + k D \in M_n(\mathbb{H})$ 
\renewcommand{\algorithmicensure}{ \textbf{Output}}
\Ensure
inverse of $Z$
\State compute $K = C + D$.
\State compute $U_1 = A^{-1}B$, $U_2 = (A + BU_1)^{-1}$, $U_3 = U_1 U_2$, $U_4 = U_3 C$, $U_5 = U_2D$.
\State compute $V_1 = (U_2 + U_3)K - U_4 - U_5$,  
        $V_2 = U_4 - U_5$,
        $V_3 = V_1 - V_2$, $ V_4 = C V_2$, $V_5 = D V_1$.
\State compute
        $W_1 = K  V_3 + A  + V_4 - V_5$,
        $W_2 = B + V_4 + V_5$,
        $W_3 = W_1^{-1}W_2$.
\State compute $ E = (W_1 + W_2 W_3)^{-1}$,
        $E_1 = V_2 E$.
\State compute 
        $F = -W_3 E$,
        $F_1 = V_1 F$.
\State compute 
        $G =  F_1 - E_1 - V_3 (E + F)$,
        $H = F_1 + E_1$.
\State\Return $Z^{-1} = E + iF + jG + k H$.
\end{algorithmic}
\end{algorithm}
\begin{proposition}
Algorithm~\ref{alg:real Frobenius} computes $Z^{-1}$ by $4$ inversions and $13$ multiplications in $M_n(\mathbb{R})$. 
\end{proposition}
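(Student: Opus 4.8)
The plan is to prove the proposition in two stages: correctness first, then the operation count. Since Algorithm~\ref{alg:inverse in MnH} is already correct by Lemma~\ref{lem:inversion formula}\eqref{lem:inversion formula-1}, I would argue that Algorithm~\ref{alg:real Frobenius} is nothing but the unfolding of Algorithm~\ref{alg:inverse in MnH} (in the case $Z \in \mathcal{E}_1$) into real arithmetic, obtained by expanding every complex inversion through the real Frobenius formula \eqref{eq:frob} of \cite{DLY22} and every complex multiplication through the three-multiplication Gauss/Karatsuba scheme. Correctness then reduces to checking, one intermediate at a time, that the real matrices produced by Algorithm~\ref{alg:real Frobenius} are exactly the real and imaginary parts of the complex intermediates of Algorithm~\ref{alg:inverse in MnH}.

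Concretely, writing $Z_1 = A+iB$ and $Z_2 = C+iD$, the dictionary I would verify is the following: $\overline{Z_1}^{-1} = (A-iB)^{-1} = U_2 + i U_3$, which is \eqref{eq:frob} applied to $A-iB$ with $U_1 = A^{-1}B$ and $U_2 = (A+BU_1)^{-1}$; next $\overline{Z_1}^{-1}\,\overline{Z_2} = V_1 + i V_2$, where $V_1 = U_2 C + U_3 D$ and $V_2 = U_3 C - U_2 D$ are recovered from $U_4$, $U_5$ and the combined product $(U_2 + U_3)K$ with $K = C+D$; then $Z_1 + Z_2\overline{Z_1}^{-1}\overline{Z_2} = W_1 + i W_2$, realized through $V_4 = CV_2$, $V_5 = DV_1$ and $KV_3$ with $V_3 = V_1 - V_2$; then the outer inversion $\bigl(Z_1 + Z_2\overline{Z_1}^{-1}\overline{Z_2}\bigr)^{-1} = E + iF$, again by \eqref{eq:frob} applied to $W_1 + iW_2$; and finally $-\,\overline{Z_1}^{-1}\overline{Z_2}\,(E+iF) = G - iH$. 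Using the quaternion relation $j(G - iH) = jG + kH$ (valid since $ji = -k$), these identifications give $Z^{-1} = (E+iF) + j(G - iH) = E + iF + jG + kH$, which is precisely the output of Algorithm~\ref{alg:real Frobenius}. The invertibility of $W_1$ and of $W_1 + W_2 W_3$ needed along the way is inherited from that of $A+iB$ exactly as in the proof of Lemma~\ref{lem:inversion formula}. Each match is a routine, if tedious, expansion of real and imaginary parts.

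For the count, the four inversions are transparent: they are the inversions of $A$, of $A + BU_1$, of $W_1$, and of $W_1 + W_2 W_3$, i.e. the inner and outer real inversions coming from the two complex inversions of Algorithm~\ref{alg:inverse in MnH}. The multiplications require more care. Unfolding Algorithm~\ref{alg:inverse in MnH} term by term produces two complex inversions and three complex multiplications, hence at most $2\cdot 3 + 3\cdot 3 = 15$ real multiplications; the content of the proposition is that two of these products need not be recomputed. I would make this precise by listing all fifteen real products of the naive expansion and then exhibiting the two coincidences that the auxiliary quantities $K = C+D$, $V_3 = V_1 - V_2$ and $E+F$ are introduced to create, so that Algorithm~\ref{alg:real Frobenius} evaluates only thirteen distinct products.

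The step I expect to be the main obstacle is exactly this last one: pinning down which two of the fifteen naive real multiplications are genuinely redundant, and certifying the algebraic identities that let them be computed only once, all while preserving every real/imaginary-part match established in the correctness stage. Everything else is a mechanical expansion and carries no conceptual difficulty.
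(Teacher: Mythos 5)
Your correctness dictionary is accurate, and it is essentially the only argument available: the paper states this proposition \emph{without proof}, justifying it only by the preceding remark that unfolding Algorithm~\ref{alg:inverse in MnH} costs at most $4$ inversions and $15$ multiplications in $M_n(\mathbb{R})$ and that ``two repeated multiplications'' can be saved. Your real/imaginary-part identifications all check out (including the sign bookkeeping $j(G-iH)=jG+kH$), as does your list of the four inversions.

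The gap is precisely the step you defer to the end, and it is not a tedious formality --- it is the entire content of the proposition, and your plan as stated would not close it. Tallying the matrix products actually performed by Algorithm~\ref{alg:real Frobenius} (the same list your dictionary produces) gives \emph{fifteen}, not thirteen: $A^{-1}B$, $BU_1$, $U_1U_2$, $U_3C$, $U_2D$, $(U_2+U_3)K$, $CV_2$, $DV_1$, $KV_3$, $W_1^{-1}W_2$, $W_2W_3$, $V_2E$, $W_3E$, $V_1F$, $V_3(E+F)$. These have pairwise distinct operand pairs, so no two of them coincide. The auxiliary quantities $K=C+D$, $V_3=V_1-V_2$ and $E+F$ are the linear combinations of the Gauss three-multiplication scheme; they economize real \emph{additions} inside each complex multiplication (indeed the addition count of $17$ in Table~\ref{tab:complexity comparison} is consistent with the algorithm as written), but none of them merges two matrix products into one. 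Consequently the ``two coincidences'' you intend to exhibit do not appear to exist in the naive unfolding, and the only route to the number $13$ is an accounting convention under which $U_1=A^{-1}B$ and $W_3=W_1^{-1}W_2$ are absorbed into the adjacent inversions as linear solves --- a convention that conflicts with the paper's own count of one complex inversion as $2$ real inversions plus $3$ real multiplications. To complete the proof you must either exhibit the two redundant products explicitly (which I do not believe can be done for the algorithm as written) or state precisely the cost model under which $A^{-1}B$-type operations are not charged as multiplications; until then your argument can only terminate at $4$ inversions and $15$ multiplications.
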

\section{Comparison with existing methods}
In this section, we compare the computational overhead of the Generalized Frobenius Inversion (Algorithms~\ref{alg:inverse in MnH} and \ref{alg:real Frobenius}) with existing methods for quaternion matrix inversion. 
\subsection{Four Alternative Methods}
As far as we are aware, there have been only four methods for quaternion matrix inversion previously discussed in the literature. Before we proceed further, we briefly summarize these methods. 
\subsubsection{Complex Method}
Let $\mathcal{\varphi}_{1}: M_n(\mathbb{H}) \to M_{2n}(\mathbb{C})$ be the injective map defined by
\[
\mathcal{\varphi}_{1}(A + iB + jC + kD) = \begin{bmatrix}
A + iB & C + iD \\
-C + iD & A - iB
\end{bmatrix},\quad A,B,C,D\in M_n(\mathbb{R}).
\]
Clearly $\mathcal{\varphi}_{1}$ is a homomorphism of algebras over $\mathbb{R}$ which sends a quaternion matrix into a double sized complex matrix. Converting quaternion matrices into complex matrices is a standard technique \cite{Fuzhen1997,WLZZ2018}, from which we may obtain Algorithm~\ref{alg:inverse in MnH via MnC}.
\begin{algorithm}[!htbp]
\caption{Complex Method}
\label{alg:inverse in MnH via MnC}
\begin{algorithmic}[1]
\renewcommand{\algorithmicrequire}{ \textbf{Input}}
\Require
$Z = A + i B + j C + k D \in M_n(\mathbb{H})$
\renewcommand{\algorithmicensure}{ \textbf{Output}}
\Ensure
inverse of $Z$
\State compute $Y=\mathcal{\varphi}_{1}(Z)$.
\State compute $V=Y^{-1}$.
\State\Return $Z^{-1}=\mathcal{\varphi}_{1}^{-1}(V)$.
\end{algorithmic}
\end{algorithm}
\subsubsection{Real Method}
We may also convert a quaternion matrix into a quadruple sized real matrix \cite{Fuzhen1997,WLZZ2018} via an injective map $\varphi_2: M_n(\mathbb{H}) \to M_{4n}(\mathbb{R})$ defined by 
\[
\mathcal{\varphi}_{2}(A + iB + jC + kD) = \begin{bmatrix}
A & B & C & D \\
-B & A & -D & C \\
-C & D & A & -B \\
-D & -C & B & A
\end{bmatrix},\quad A,B,C,D\in M_n(\mathbb{R}).
\]
Again,  $\varphi_2$ is an $\mathbb{R}$-algebra homomorphism which leads to Algorithm~\ref{alg:inverse in MnH via MnR}.
\begin{algorithm}[!htbp]
\caption{Real Method}
\label{alg:inverse in MnH via MnR}
\begin{algorithmic}[1]
\renewcommand{\algorithmicrequire}{ \textbf{Input}}
\Require
$Z = A + i B + j C + k D \in M_n(\mathbb{H})$
\renewcommand{\algorithmicensure}{ \textbf{Output}}
\Ensure
inverse of $Z$
\State compute $Y=\mathcal{\varphi}_{2}(Z)$.
\State compute $V=Y^{-1}$.
\State\Return $Z^{-1}=\mathcal{\varphi}_{2}^{-1}(V)$.
\end{algorithmic}
\end{algorithm}
\subsubsection{Skew Real Method}
By exploring the block skew-symmetric structure of $\mathcal{\varphi}_{2}(A + iB + jC + kD)$, a more efficient approach is proposed in \cite{Turner2009}. For ease of reference, we record the method in Algorithm~\ref{alg:skew real method}. We remark that the orginal approach in \cite{Turner2009} is presented in terms of block matrices, while Algorithm~\ref{alg:skew real method} is a simplified and more efficient version.
\begin{algorithm}[!htbp]
\caption{Skew Real Method}
\label{alg:skew real method}
\begin{algorithmic}[1]
\renewcommand{\algorithmicrequire}{ \textbf{Input}}
\Require
$Z = A + i B + j C + k D \in M_n(\mathbb{H})$
\renewcommand{\algorithmicensure}{ \textbf{Output}}
\Ensure
inverse of $Z$
\State compute $U_1 = A^{-1}B$, $U_2 = A + BU_1$, $U_3 = U_2^{-1}$, $U_4 = U_1 U_3$.
\State compute $V_1 = U_3 C + U_4 D$, $V_2 = -U_4 C+U_3 D$.
\State compute $W_1 = A + C V_1 - DV_2, W_2 = B + DV_1+ CV_2$, $W_3 = W_1^{-1} W_2$.
\State compute $X = (W_1 + W_2 W_3)^{-1}$, $Y = - W_3 X$, $Z = - V_1 X - V_2 Y$, $W = V_2 X - V_1 Y$.
\State\Return $Z^{-1}=X + iY + jZ + kW$.
\end{algorithmic}
\end{algorithm}
\subsubsection{Quaternion Toolbox For Matlab (QTFM)}. A quaternion inversion algorithm for quaternion matrix inversion is implemented in the QTFM \cite{SB2022}. The main idea behind this approach is to partition a matrix $Z\in M_{n}(\mathbb{H})$ into 
\[
Z = \begin{bmatrix}
Z_{11} & Z_{12} \\
Z_{21} & Z_{22}
\end{bmatrix}
\]
for some $Z_{11}\in M_{\lfloor n/2 \rfloor}(\mathbb{H})$, $Z_{12}\in \mathbb{H}^{\lfloor n/2 \rfloor \times \lceil n/2 \rceil}$, $Z_{21}\in \mathbb{H}^{\lceil n/2 \rceil \times \lfloor n/2 \rfloor}$ and $Z_{22}\in M_{\lceil n/2 \rceil}(\mathbb{H})$. Then the inverse of $Z$ can be computed by the Schur complement \cite{HJ2012} and one can repeat the procedure to obtain Algorithm~\ref{alg:toolbox} for $Z^{-1}$. The key feature of Algorithm~\ref{alg:toolbox} that distinguishes it from Algorithms~\ref{alg:inverse in MnH}--\ref{alg:skew real method} is that operations in Algorithm~\ref{alg:toolbox} are all over $\mathbb{H}$, rather than $\mathbb{C}$ or $\mathbb{R}$. 
\begin{algorithm}[!htbp]
\caption{toolbox}
\label{alg:toolbox}
\begin{algorithmic}[1]
\renewcommand{\algorithmicrequire}{ \textbf{Input}}
\Require
$Z \in M_n(\mathbb{H})$
\renewcommand{\algorithmicensure}{ \textbf{Output}}
\Ensure
inverse of $Z$
\State partition
$Z = \begin{bmatrix}
Z_{11} & Z_{12} \\
Z_{21} & Z_{22}
\end{bmatrix}$. 
\State compute $U_1 = Z_{11}^{-1}$, 
$U_2 =U_1 Z_{12}$, 
$U_3 = Z_{21} U_1$, 
\Comment{inversion in $M_{\lfloor n/2 \rfloor}(\mathbb{H})$}
\State compute $U_4 = (Z_{22} - U_3 Z_{12} )^{-1}$,
$U_5 = U_4 U_3$.  
\Comment{inversion in $M_{\lceil n/2 \rceil}(\mathbb{H})$}
\State compute $R = \begin{bmatrix}
U_1 + U_2 U_5 & -U_2 U_4 \\
- U_5 & U_4
\end{bmatrix}$
\State\Return $Z^{-1}=X + iY + jZ + kW$.
\end{algorithmic}
\end{algorithm}
\subsection{Comparison}
In this subsection, we compare computational complexities of Algorithms~\ref{alg:inverse in MnH}--\ref{alg:toolbox}. We denote by $\operatorname{inv}_{n,\mathbb{F}}$ (resp. $\operatorname{mult}_{n,\mathbb{F}}$, $\operatorname{add}_{n,\mathbb{F}}$) the inversion (resp. multiplication, addition) in $M_n(\mathbb{F})$, where $\mathbb{F} = \mathbb{R},\mathbb{C}$ or $\mathbb{H}$. In Table~\ref{tab:complexity comparison}, we record operations required by each of Algorithms~\ref{alg:inverse in MnH}--\ref{alg:toolbox} in the column labelled by ``operations".

Let $\mathcal{A}$ be an algorithm for matrix multiplication over $\mathbb{H}$. We denote by $c_{n,\mathbb{H}}$, $c_{n,\mathbb{C}}$ and $c_{n,\mathbb{R}}$ the complexity of $\mathcal{A}$ restricted to $M_n(\mathbb{H})$, $M_n(\mathbb{C})$ and $M_n(\mathbb{R})$ respectively. 
\begin{theorem}
If we compute $\operatorname{mult}_{n,\mathbb{H}}$ by $\mathcal{A}$ and compute $\operatorname{inv}_{n,\mathbb{C}}$ by the LU-decomposition method, then the complexity (ignoring the lower order terms) of Algorithms~\ref{alg:inverse in MnH}--\ref{alg:toolbox} is as shown in the column of Table~\ref{tab:complexity comparison} with label ``complexity". In particular, if $\mathcal{A}$ is the usual algorithm\footnote{This refers to the algorithm obtained by the definition of matrix multiplication.} for matrix multiplication, and if operations in $\mathbb{H}$ and $\mathbb{C}$ are computed by usual methods\footnote{This refers to algorithms obtained by definitions of multiplication and addition in $\mathbb{H}$ and $\mathbb{C}$.} over $\mathbb{R}$, then among Algorithms~\ref{alg:inverse in MnH}--\ref{alg:toolbox}, Algorithm~\ref{alg:real Frobenius} is optimal.
\end{theorem}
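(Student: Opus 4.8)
The plan is to prove the two assertions in turn: first that each row of the ``complexity'' column of Table~\ref{tab:complexity comparison} is correct, and then that, under the naive multiplication model, Algorithm~\ref{alg:real Frobenius} attains the smallest leading coefficient. I would organize the whole argument around a single bookkeeping principle: every algorithm is reduced to a count of inversions and multiplications over its \emph{native} field, and these counts are then converted to a common unit---the leading coefficient of $n^{3}$ in the number of real arithmetic operations---using only two ingredients, the cost of $\operatorname{inv}_{n,\mathbb{C}}$ under LU and the cost $c_{n,\mathbb{F}}$ of the chosen multiplication algorithm $\mathcal{A}$ for each $\mathbb{F}=\mathbb{R},\mathbb{C},\mathbb{H}$. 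Matrix additions contribute only $O(n^{2})$ and are absorbed into the lower-order terms that the statement already discards.

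First I would read the native-field operation counts directly off the pseudocode. For Algorithms~\ref{alg:inverse in MnH} and \ref{alg:real Frobenius} these are already recorded ($2$ inversions and $3$ multiplications over $\mathbb{C}$, respectively $4$ inversions and $13$ multiplications over $\mathbb{R}$), and for Algorithm~\ref{alg:skew real method} an analogous direct count over $\mathbb{R}$ suffices; the corresponding rows then follow by substituting $\operatorname{inv}_{n,\mathbb{C}}$ and the relevant $c_{n,\mathbb{F}}$. Algorithms~\ref{alg:inverse in MnH via MnC} and \ref{alg:inverse in MnH via MnR} each perform a single inversion, but at the inflated sizes $2n$ and $4n$, so their rows are $\operatorname{inv}_{2n,\mathbb{C}}$ and $\operatorname{inv}_{4n,\mathbb{R}}$, which I would expand via the cubic scaling $\operatorname{inv}_{kn}\sim k^{3}\operatorname{inv}_{n}$. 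The one genuinely recursive case is the toolbox, Algorithm~\ref{alg:toolbox}: each level performs two inversions of half size together with six multiplications of half size over $\mathbb{H}$, yielding
\[
T(n)=2\,T(n/2)+6\,c_{n/2,\mathbb{H}}+O(n^{2}).
\]
Solving this by summing the resulting geometric series (or invoking the master theorem) produces the closed-form leading term entered in the table.

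Next I would specialize to $\mathcal{A}$ naive and scalar arithmetic performed by definition. Here $c_{n,\mathbb{R}}\sim n^{3}$, while a complex (resp.\ quaternion) scalar product costs $4$ (resp.\ $16$) real products, so $c_{n,\mathbb{C}}$ and $c_{n,\mathbb{H}}$ acquire the corresponding factors; likewise the LU inversion cost fixes the leading coefficient of $\operatorname{inv}_{n,\mathbb{R}}$ and, through the same scalar overhead, of $\operatorname{inv}_{n,\mathbb{C}}$, $\operatorname{inv}_{2n,\mathbb{C}}$ and $\operatorname{inv}_{4n,\mathbb{R}}$. Substituting these into the six rows collapses each complexity to a single multiple of $n^{3}$, and the optimality claim reduces to an arithmetic inequality among six leading coefficients, to be checked term by term. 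The embedding methods (Algorithms~\ref{alg:inverse in MnH via MnC} and \ref{alg:inverse in MnH via MnR}) are eliminated at once by their $k^{3}$ blow-up, leaving the remaining real-operation method and the toolbox as the real contenders against Algorithm~\ref{alg:real Frobenius}.

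The main obstacle I anticipate is precisely this final comparison against the recursive toolbox. Unlike the embedding methods, the Schur-complement recursion is itself efficient---its cost is of the same cubic order as quaternion matrix multiplication---so order-of-magnitude arguments do not settle the matter and the decision rests on the precise leading coefficients. This makes the argument sensitive to two constants that must be pinned down consistently: the leading operation count of LU inversion relative to matrix multiplication, and the real-multiplication cost of the ``usual'' quaternion and complex products. The conceptual point to extract, and the crux of the proof, is that Algorithm~\ref{alg:real Frobenius} carries out all of its work over $\mathbb{R}$ and thereby avoids the multiplicative overhead that the usual complex and quaternion arithmetic incur in every competing method; making this saving quantitative, and verifying that it strictly outweighs the savings of the recursive scheme, is where the real effort lies.
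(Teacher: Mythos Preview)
Your plan is correct and follows essentially the same route as the paper: read off the native-field operation counts from the pseudocode (and the table), substitute the LU inversion cost $\tfrac{4n^{3}}{3}(\operatorname{mult}_{1,\mathbb{F}}+\operatorname{add}_{1,\mathbb{F}})$ and the naive multiplication cost $c_{n,\mathbb{F}}=n^{3}(\operatorname{mult}_{1,\mathbb{F}}+\operatorname{add}_{1,\mathbb{F}})$, expand the scalar costs over $\mathbb{R}$, and compare the resulting six leading coefficients. The paper does exactly this, arriving at $\tfrac{136}{3},\tfrac{110}{3},\tfrac{256}{3},\tfrac{512}{3},\tfrac{128}{3},128$ times $n^{3}$; just be sure to count both real multiplications \emph{and} additions in the scalar costs (e.g.\ $\operatorname{mult}_{1,\mathbb{C}}=4\operatorname{mult}_{1,\mathbb{R}}+2\operatorname{add}_{1,\mathbb{R}}$), since the final comparison is made in total flops, not real products alone.
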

\begin{proof}
We recall that inverting an $n\times n$ matrix by the LU-decomposition over $\mathbb{C}$ (resp. $\mathbb{R}$) costs (ignoring lower order terms) $\frac{4n^3}{3} (\operatorname{mult}_{1,\mathbb{C}} + \operatorname{add}_{1,\mathbb{C}})$ (resp. $\frac{4n^3}{3} (\operatorname{mult}_{1,\mathbb{R}} + \operatorname{add}_{1,\mathbb{R}})$). If $\mathcal{A}$ is the usual algorithm for matrix multiplication, then we have 
\[
c_{n,\mathbb{F}} = n^3 (\operatorname{mult}_{1,\mathbb{F}} + \operatorname{add}_{1,\mathbb{F}}),
\] 
where $\mathbb{F} = \mathbb{H},\mathbb{C}$ or $\mathbb{R}$. Moreover, if we compute operations in $\mathbb{H}$ and $\mathbb{C}$ in the usual way, then we also have 
\begin{align*}
\operatorname{mult}_{1,\mathbb{H}} &= 16 \operatorname{mult}_{1,\mathbb{R}} + 12 \operatorname{add}_{1,\mathbb{R}},\quad \operatorname{add}_{1,\mathbb{H}} =  4 \operatorname{add}_{1,\mathbb{R}}, \\
\operatorname{mult}_{1,\mathbb{C}} &= 4 \operatorname{mult}_{1,\mathbb{R}} + 2 \operatorname{add}_{1,\mathbb{R}},\quad \operatorname{add}_{1,\mathbb{C}} =  2 \operatorname{add}_{1,\mathbb{R}}.
\end{align*}
This implies that Algorithms~\ref{alg:inverse in MnH}--\ref{alg:toolbox} respectively cost $\frac{136 n^3}{3}, \frac{110 n^3}{3}, \frac{256 n^3}{3}, \frac{512 n^3}{3}, \frac{128 n^3}{3}, 128 n^3$ flops in $\mathbb{R}$, from which we may conclude the optimality of Algorithm~\ref{alg:real Frobenius}.
\end{proof}
\begin{table}[htbp]
\begin{center}
 \begin{tabular}{p{60pt}p{220pt}p{160pt}}
\toprule  Algorithm  & operations  & complexity\\
\midrule 
\ref{alg:inverse in MnH} & $2 \operatorname{inv}_{n,\mathbb{C}} + 3 \operatorname{mult}_{n,\mathbb{C}} + \operatorname{add}_{n,\mathbb{C}}$  & $\frac{8n^3}{3} (\operatorname{mult}_{1,\mathbb{C}} + \operatorname{add}_{1,\mathbb{C}}) + 3 c_{n,\mathbb{C}}$ \\
\ref{alg:real Frobenius} & $4 \operatorname{inv}_{n,\mathbb{R}} + 13 \operatorname{mult}_{n,\mathbb{R}} + 17 \operatorname{add}_{n,\mathbb{R}}$  & $\frac{16n^3}{3} (\operatorname{mult}_{1,\mathbb{R}} + \operatorname{add}_{1,\mathbb{R}}) + 13 c_{n,\mathbb{R}}$\\
\ref{alg:inverse in MnH via MnC} & $\operatorname{inv}_{2n,\mathbb{C}}$ & $\frac{32n^3}{3} (\operatorname{mult}_{1,\mathbb{C}} + \operatorname{add}_{1,\mathbb{C}})$ \\
\ref{alg:inverse in MnH via MnR}  & $\operatorname{inv}_{4n,\mathbb{R}}$   & $\frac{256n^3}{3} (\operatorname{mult}_{1,\mathbb{R}} + \operatorname{add}_{1,\mathbb{R}})$\\
\ref{alg:skew real method} &  $4 \operatorname{inv}_{n,\mathbb{R}} + 16 \operatorname{mult}_{n,\mathbb{R}} + 10 \operatorname{add}_{n,\mathbb{R}}$ & $\frac{16n^3}{3} (\operatorname{mult}_{1,\mathbb{R}} + \operatorname{add}_{1,\mathbb{R}}) + 16 c_{n,\mathbb{R}}$ \\
\ref{alg:toolbox} & $n \operatorname{inv}_{1,\mathbb{H}} + \sum_{j=1}^{\log_2n} 2^{j} (3 \operatorname{mult}_{n/2^j,\mathbb{H}} + \operatorname{add}_{n/2^j,\mathbb{H}})  $ & $3 \sum_{j=1}^{\log_2n} 2^{j} c_{n/2^j,\mathbb{H}}$ \\
\bottomrule
\end{tabular}
\end{center}
\caption{comparison of complexities}
\label{tab:complexity comparison}
\end{table}
\begin{corollary}
Let $\mathcal{A}$ be an algorithm for matrix multiplication that is faster than the usual algorithm. If we compute operations in $\mathbb{H}$ and $\mathbb{C}$ by usual methods over $\mathbb{R}$, then Algorithm~\ref{alg:real Frobenius} is still optimal among Algorithms~\ref{alg:inverse in MnH}--\ref{alg:toolbox}.
\end{corollary}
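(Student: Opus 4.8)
The plan is to read the six complexity expressions straight off the ``complexity'' column of Table~\ref{tab:complexity comparison} and compare them directly, exploiting one structural observation: every entry splits into \emph{LU-inversion terms} of the shape $\frac{cn^3}{3}(\operatorname{mult}_{1,\mathbb{F}}+\operatorname{add}_{1,\mathbb{F}})$, which are cubic and entirely independent of the multiplication algorithm $\mathcal{A}$, and \emph{matrix-multiplication terms}, which enter only through $c_{n,\mathbb{R}}$, $c_{n,\mathbb{C}}$, or $c_{n,\mathbb{H}}$. Passing from the usual algorithm to a faster $\mathcal{A}$ therefore leaves every LU-inversion term fixed and can only shrink the $c_{n,\mathbb{F}}$ terms. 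The first step is to record that, since $\mathcal{A}$ performs the same number $M(n)$ of scalar multiplications and $P(n)$ of scalar additions regardless of the base ring, the stated conversions $\operatorname{mult}_{1,\mathbb{C}}=4\operatorname{mult}_{1,\mathbb{R}}+2\operatorname{add}_{1,\mathbb{R}}$ and $\operatorname{mult}_{1,\mathbb{H}}=16\operatorname{mult}_{1,\mathbb{R}}+12\operatorname{add}_{1,\mathbb{R}}$ (together with the analogues for additions) give the monotonicity $c_{n,\mathbb{R}}\le c_{n,\mathbb{C}}\le c_{n,\mathbb{H}}$ and, more usefully, a factor $4$ (resp.\ $16$) inflation of the \emph{multiplication part} of $c_{n,\mathbb{C}}$ (resp.\ $c_{n,\mathbb{H}}$) over that of $c_{n,\mathbb{R}}$. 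These inequalities are the engine of the comparison.

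Next I would dispose of the easy cases. Algorithm~\ref{alg:inverse in MnH via MnC} and Algorithm~\ref{alg:inverse in MnH via MnR} carry no $\mathcal{A}$-dependent term, so their costs stay at $\frac{32n^3}{3}(\operatorname{mult}_{1,\mathbb{C}}+\operatorname{add}_{1,\mathbb{C}})$ and $\frac{256n^3}{3}(\operatorname{mult}_{1,\mathbb{R}}+\operatorname{add}_{1,\mathbb{R}})$; one checks each already exceeds the cost of Algorithm~\ref{alg:real Frobenius} evaluated with the \emph{usual} $\mathcal{A}$, hence a fortiori with the faster one. Among the formula methods that invoke $\mathcal{A}$ over $\mathbb{C}$ or $\mathbb{R}$, Algorithm~\ref{alg:real Frobenius} and Algorithm~\ref{alg:skew real method} share the same LU term $\frac{16n^3}{3}(\operatorname{mult}_{1,\mathbb{R}}+\operatorname{add}_{1,\mathbb{R}})$ and differ only in the multiplication coefficient $13\,c_{n,\mathbb{R}}$ versus $16\,c_{n,\mathbb{R}}$, so Algorithm~\ref{alg:real Frobenius} wins by $3\,c_{n,\mathbb{R}}>0$ for every $\mathcal{A}$. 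Against Algorithm~\ref{alg:inverse in MnH}, the decisive point is that the LU term of Algorithm~\ref{alg:real Frobenius} is exactly half ($\frac{16n^3}{3}$ versus $\frac{32n^3}{3}$); for a genuinely faster $\mathcal{A}$ the multiplication terms $3\,c_{n,\mathbb{C}}$ and $13\,c_{n,\mathbb{R}}$ are of lower order than this cubic gap (using $c_{n,\mathbb{C}}\ge c_{n,\mathbb{R}}$ to bound the difference), so the gap cannot be overturned. This is the routine portion of the argument.

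The main obstacle is the comparison with Algorithm~\ref{alg:toolbox}, because the toolbox is the one method whose complexity $3\sum_{j=1}^{\log_2 n}2^{j}c_{n/2^j,\mathbb{H}}$ contains \emph{no} fixed cubic LU-inversion term at all: it recurses down to the $n$ scalar inversions $n\operatorname{inv}_{1,\mathbb{H}}$. Consequently the only leverage over it is the quaternion overhead, namely the factor $16$ that the conversion above forces onto the multiplication part of each $c_{n/2^j,\mathbb{H}}$ relative to $c_{n/2^j,\mathbb{R}}$. The plan is to lower-bound the geometric sum by its leading ($j=1$) term $2\,c_{n/2,\mathbb{H}}$, rewrite it in real operations via the conversion, and weigh the resulting quaternion-inflated multiplication cost against the combined LU-inversion-plus-multiplication cost of Algorithm~\ref{alg:real Frobenius}. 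I expect this to be the delicate step, and it is precisely here that the hypothesis ``operations in $\mathbb{H}$ and $\mathbb{C}$ are computed by the usual methods over $\mathbb{R}$'' is indispensable, since it pins down the factor-$16$ inflation that is supposed to keep the toolbox above Algorithm~\ref{alg:real Frobenius}. I would also flag a genuine subtlety worth isolating in the write-up: once $\mathcal{A}$ is subcubic, the toolbox cost is of strictly lower order in $n$ than the cubic LU term paid by Algorithm~\ref{alg:real Frobenius}, so the comparison with Algorithm~\ref{alg:toolbox} should be understood through the matched leading coefficients (equivalently, on the practically relevant range of $n$) rather than as a pointwise inequality valid for all $n$; making this precise is the crux on which the corollary ultimately rests.
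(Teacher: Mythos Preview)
The paper states this corollary without proof, so there is no argument of the paper's to compare against; you are attempting to supply what the authors left implicit. For Algorithms~\ref{alg:inverse in MnH via MnC}, \ref{alg:inverse in MnH via MnR}, and \ref{alg:skew real method} your reasoning is fine: the first two carry no $\mathcal{A}$-dependent term and already lose to Algorithm~\ref{alg:real Frobenius} under the usual multiplication, while Algorithm~\ref{alg:skew real method} shares the same LU term as Algorithm~\ref{alg:real Frobenius} and loses by $3c_{n,\mathbb{R}}>0$ for every $\mathcal{A}$. Your treatment of Algorithm~\ref{alg:inverse in MnH} is also essentially correct: converting its LU term to real operations gives $\frac{32n^3}{3}(\operatorname{mult}_{1,\mathbb{R}}+\operatorname{add}_{1,\mathbb{R}})$, twice that of Algorithm~\ref{alg:real Frobenius}, and a short computation (not merely ``lower order'') shows the multiplication-term difference $3c_{n,\mathbb{C}}-13c_{n,\mathbb{R}}$ cannot close that gap for any $\mathcal{A}$ no faster than the usual one would already allow.

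Where you are right to be uneasy is Algorithm~\ref{alg:toolbox}, and here you have not found a gap in your own argument so much as a gap in the corollary itself. As you observe, the toolbox carries no fixed cubic LU term: its cost is $3\sum_{j}2^{j}c_{n/2^{j},\mathbb{H}}$, which for a genuinely subcubic $\mathcal{A}$ (say $c_{m,\mathbb{F}}=O(m^{\omega})$ with $\omega<3$) is $O(n^{\omega})$, while Algorithm~\ref{alg:real Frobenius} retains the $\frac{16n^3}{3}(\operatorname{mult}_{1,\mathbb{R}}+\operatorname{add}_{1,\mathbb{R}})$ term and is $\Theta(n^{3})$. Hence, asymptotically, Algorithm~\ref{alg:toolbox} is \emph{faster}, and the corollary cannot hold as a statement valid for all $n$. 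Your attempt to rescue it by restricting to ``the practically relevant range of $n$'' is honest but is not a proof of the corollary as written; it is a reinterpretation. In short: your analysis is sound, and what it reveals is that the corollary requires either an additional hypothesis (e.g., that inversion is also performed via $\mathcal{A}$ rather than LU, or that $\mathcal{A}$ is still cubic) or a restriction on $n$, neither of which the paper supplies.
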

We conclude this subsection by a remark on the computation of operations in $\mathbb{H}$ and $\mathbb{C}$ over $\mathbb{R}$. It is easy to prove that the usual algorithms for $\operatorname{add}_{1,\mathbb{H}}$ and $\operatorname{add}_{1,\mathbb{C}}$ are already optimal. It is known \cite{HL75,Hans75} that any non-commutative algorithm for $\operatorname{mult}_{1,\mathbb{H}}$ costs at least $8$ multiplications in $\mathbb{R}$, yet there is no known algorithm for $\operatorname{mult}_{1,\mathbb{H}}$ costs $8$ multiplications and less than $32$ additions in $\mathbb{R}$. Thus the usual algorithm is in fact the optimal one (in the sense of total cost of multiplications and additions ) among all existing\footnote{We reiterate that we are assessing optimality in terms of the set of cubic-complexity quaternion algorithms currently available. Those algorithms, and ours, can of course be applied as the base case within the generic recursions used by the best available Strassen-type algorithm, with the optimal among that set clearly providing the best coefficient on the overall subcubic complexity.} algorithms for $\operatorname{mult}_{1,\mathbb{H}}$. 

Similarly, it is well-known \cite{STRASSEN1983645,KL2018, STRASSEN+1987+406+443,Ramachandra1973VermeidungVD,algebraiccomplexity} that any non-commutative algorithm for $\operatorname{mult}_{1,\mathbb{C}}$ requires at least $3$ multiplications in $\mathbb{R}$ and the optimality is achieved by the Gauss algorithm. However, the Gauss algorithm costs $5$ additions in $\mathbb{R}$. Therefore, the usual algorithm is most efficient among all known algorithms for $\operatorname{mult}_{1,\mathbb{C}}$.
\subsection{Experiments}
For each integer $1 \le m \le 50$, we take $n = 100m$ and apply Algorithms~\ref{alg:inverse in MnH}--\ref{alg:toolbox} to invert random matrices in $M_{n}(\mathbb{H})$. To be more precise, we generate $n\times n$ real matrices $A,B,C$ and $D$ whose elements are uniformly drawn from the interval $(-1,1)$. Then with probability one, Algorithms~\ref{alg:inverse in MnH}--\ref{alg:toolbox} are all applicable to $Z = A + i B + j C + k D\in M_n(\mathbb{H})$. For each $m$, we repeat the above procedure $50$ times to obtain $50$ samples for our test.

We record in Figure~\ref{fig:time} the average running time of each of Algorithms~\ref{alg:inverse in MnH}--\ref{alg:toolbox}. For better comparison, we also compute the ratio 
\[
r_{n,s} = \frac{t_{n,5}}{t_{n,s}},\quad s\in \{1,2,3,4,6\}.
\]  
Here $t_{n,p}$ denotes the average running time of Algorithm~$p$ for instances of size $n\times n$. By definition, $r_{n,s} > r_{n,s'}$ indicates that Algorithm~$s$ is more efficient than Algorithm~$s'$. Figure~\ref{fig:ratio} exhibits how $r_{n,s}$ varies as $n$ grows for each $s\in \{1,2,3,4,6\}$. It is clear from Figures~\ref{fig:time} and \ref{fig:ratio} that Algorithm~\ref{alg:real Frobenius} is significantly faster than all other five algorithms.
\begin{figure*}
\centering
\includegraphics[scale=0.35]{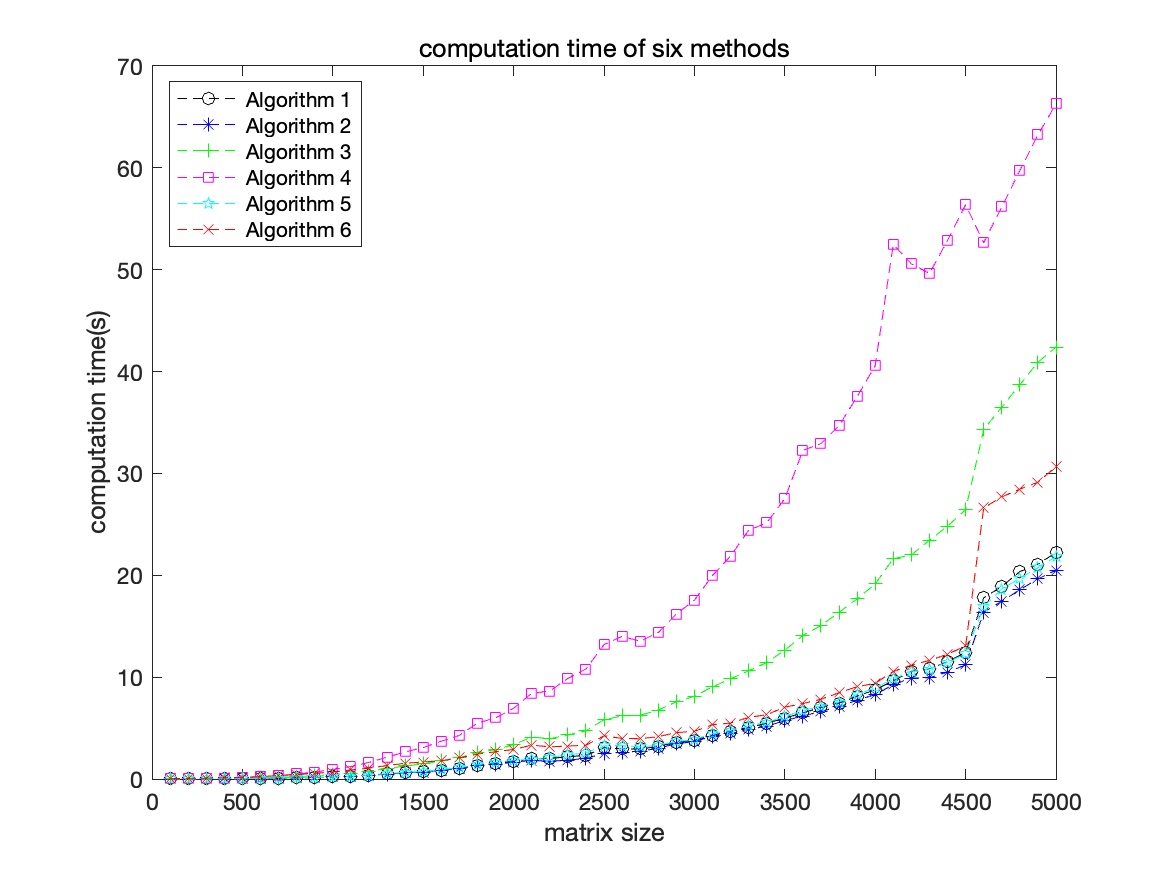}
\caption{Algorithm~\ref{alg:real Frobenius} (blue) incurs significantly less computational overhead compared to other five algorithms.}
\label{fig:time}
\end{figure*}

\begin{figure*}
\centering
\includegraphics[scale=0.35]{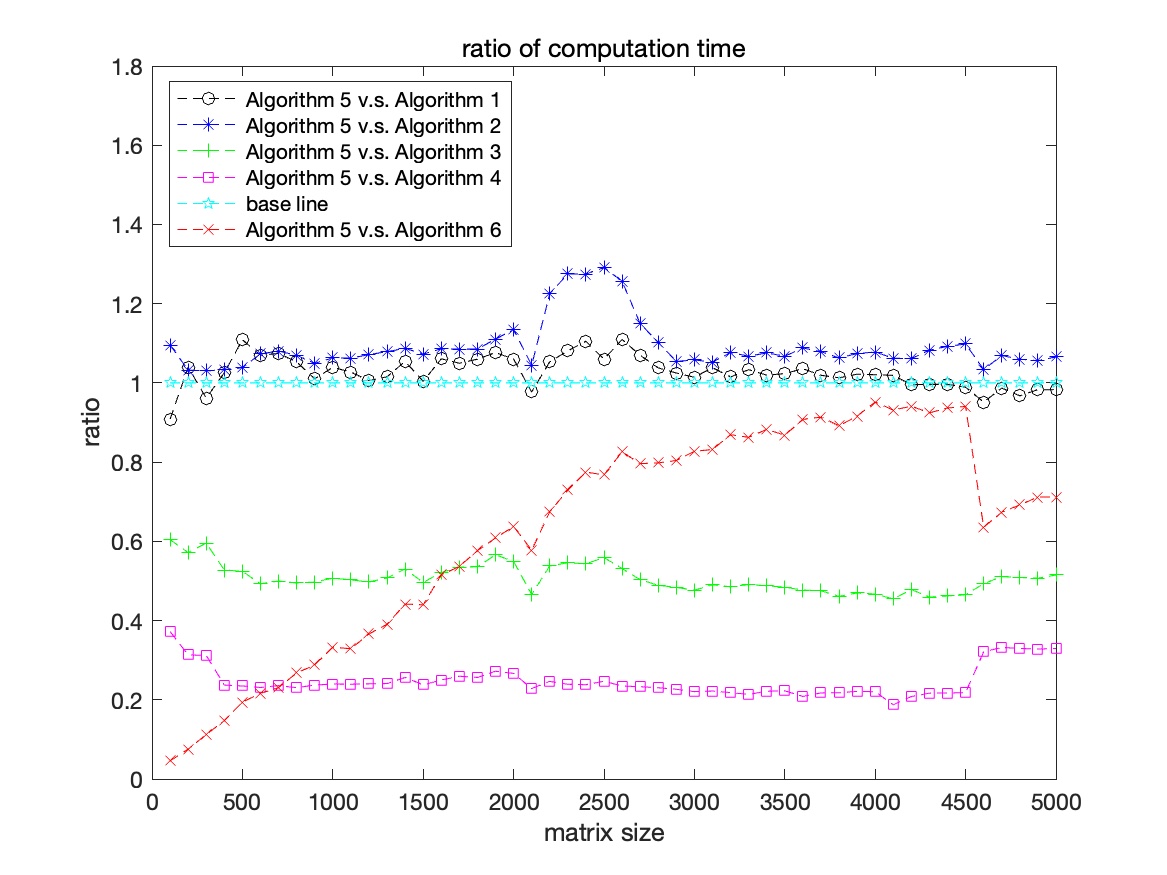}
\caption{The ratio (blue) of Algorithm~\ref{alg:real Frobenius} is significantly greater than ratios of other algorithms.}
\label{fig:ratio}
\end{figure*}

Let $\widehat{Z}$ be the inverse of $Z$ obtained by one of Algorithms~\ref{alg:inverse in MnH}--\ref{alg:toolbox}. We measure the quality of $\widehat{Z}$ by the mean right residual:
\[
\operatorname{res}_{n} = \frac{\lVert Z \widehat{Z} - I_n \rVert_F}{n^2}. 
\]
Here $\lVert \cdot \rVert_F$ is the Frobenius norm on $M_n(\mathbb{H})$. We record in Figure~\ref{fig:error} the average of the mean right residual of $50$ samples for each of Algorithms~\ref{alg:inverse in MnH}--\ref{alg:toolbox}. From Figure~\ref{fig:error}, we may easily conclude that the mean right residual for Algorithms~\ref{alg:real Frobenius}, \ref{alg:skew real method} and \ref{alg:toolbox} is less than $5\cdot 10^{-13}$, while the other three algorithms have relatively smaller mean right residual.
\begin{figure*}
\centering
\includegraphics[scale=0.35]{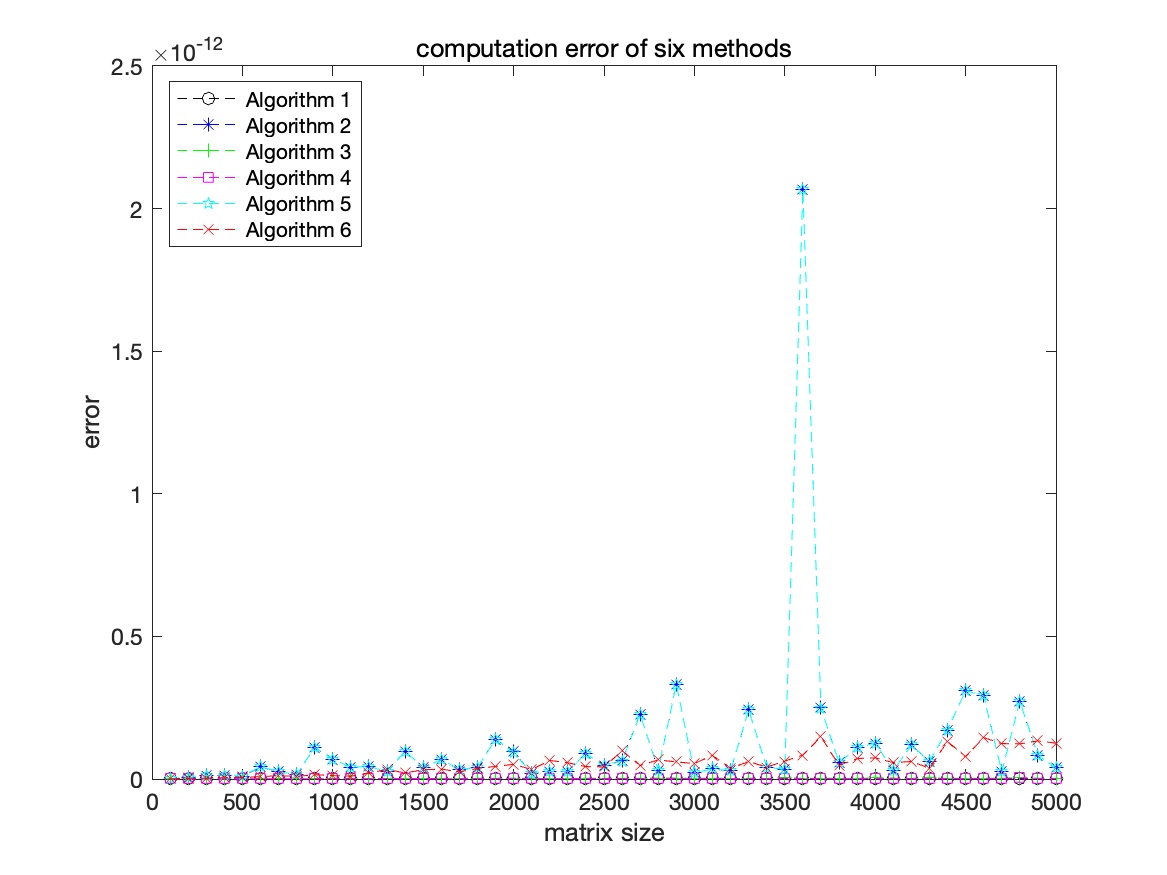}
\caption{The mean right residual of Algorithm~\ref{alg:inverse in MnH} is at most $5\cdot 10^{-13}$ for all but one dimensions.}
\label{fig:error}
\end{figure*}

\section{Discussion}

In this paper we have presented a new algorithm for efficiently inverting quaternion matrices, and we
have provided empirical results showing that it is the fastest among quaternion inversion algorithms
found in the literature. Consequently, it represents the optimizing recursive base case for Strassen-type
inversion algorithms to achieve the best combination of computational complexity and practical
performance for inverting quaternion matrices.

\bibliographystyle{abbrv}
\bibliography{FrobQuat-05_03_2023}

\end{document}